\theoremstyle{plain}
\newtheorem{thm}{Theorem}
\newtheorem{lemma}[thm]{Lemma}
\newtheorem{lem}[thm]{Lemma}
\newtheorem{cor}[thm]{Corollary}
\newtheorem{prop}[thm]{Proposition}
\theoremstyle{definition}
\numberwithin{thm}{section}
\newcommand{\comment}[1]{}
\newcommand{\Hom}{\operatorname{Hom}}
\newcommand{\Ext}{\operatorname{Ext}}
\newcommand{\ds}{\displaystyle}
\newcommand{\arc}{(\textsf{ARC})~}
\newcommand{\garc}{(\textsf{GARC})~}
\renewcommand{\to}{\longrightarrow}
\newcommand{\StHom}{\underline{\operatorname{Hom}}}
\newcommand{\D}{\operatorname{D^b(}H\operatorname{)}}
\newcommand{\Lmod}{\operatorname{\Lambda \textrm{-}mod}}
\newcommand{\Hmod}{\operatorname{H \textrm{-}mod}}
\newcommand{\SHmod}{\operatorname{\widehat{H} \textrm{-}\underline{mod}}}
\newcommand{\SHLmod}{\operatorname{\hat{\Lambda} \textrm{-}\underline{mod}}}
\newcommand{\SLmod}{\operatorname{\Lambda \textrm{-}\underline{mod}}}
\newcommand{\SHNmod}{\operatorname{H_n \textrm{-}\underline{mod}}}
\newcommand{\SBNmod}{\operatorname{B_n \textrm{-}\underline{mod}}}
\newcommand{\SGmod}{\operatorname{\Gamma \textrm{-}\underline{mod}}}
\begin{document}

\title[Self-Extensions over $n$-Symmetric Algerbas]{The Vanishing of Self-Extensions over $n$-Symmetric Algebras of Quasitilted Type}

\author{Maciej Karpicz \ and \ Marju Purin}\address{Faculty of Mathematics and Computer Science, \\Nicolaus Copernicus University\\ ul. Chopina 12/18\\ 87-100 Toru{\'n}}\email{mkarpicz@mat.umk.pl}
\address{Department of Mathematics, Statistics, and Computer Science \\St.\ Olaf College\\ Northfield\\ MN 55057\\ USA}\email{purin@stolaf.edu}

\maketitle

\begin{abstract} A ring $\Lambda$ satisfies the Generalized Auslander-Reiten Condition \garc if for each $\Lambda$-module $M$ with $\Ext^i(M,M\oplus \Lambda)=0$ for all $i>n$  the projective dimension of $M$ is at most $n$. We prove that this condition is satisfied by all $n$-symmetric algebras of quasitilted type -- a~broad class of self-injective algebras where every module is $\nu$-periodic.   \end{abstract}

\section{Introduction}

We study the vanishing of self-extensions of finitely generated modules over a~class of self-injective algebras, namely, the $n$-symmetric algebras of quasitilted type.
We show that this family of algebras satisfies the Generalized Auslander-Reiten Condition.

Recall that a~ring $\Lambda$ is said to satisfy the {\it Generalized Auslander-Reiten Condition} (GARC) if for each $\Lambda$-module $M$ with $\Ext_{\Lambda}^i(M,M \oplus \Lambda) = 0$ for all $i > n$ the projective dimension of $M$ is at most $n$. The condition \garc generalizes the original Auslander-Reiten Condition \arc, the case when $n=0$, which dates back to 1975 \cite{AR}. The Auslander-Reiten Conjecture, still an open question, asserts that all artin algebras satisfy the Auslander-Reiten Condition \cite{AR}.  It is clear that any algebra satisfying (GARC) also satisfies (ARC), but it is known that these conditions are not equivalent.

In this paper we prove that the condition \garc is satisfied by all $n$-symmetric algebras of quasitilted type. This is a~class of self-injective algebras obtained via a~standard construction from the repetitive algebras of the quasitilted algebras. Every $n$-symmetric algebra of quasitilted type have the property that $\nu^n=1$, where $\nu$ denotes the Nakayama automorphism. Our motivation comes from the following two facts. First, the only known examples of algebras that do not satisfy (GARC) are self-injective \cite{E, Schulz}. Second, the only modules for which \arc can fail are the $\nu$-periodic modules \cite{D3}. Hence, one wants a~better understanding of the condition \garc for self-injective algebras in general and $\nu$-periodic algebras in particular.

Recently, Erdmann has examined the condition \garc for weakly symmetric algebras with radical cube zero. She has shown that the condition \garc is related to the (Fg) condition of \cite{EHSST}. In particular, a~weakly symmetric radical cube zero algebra does not satisfy \garc precisely when it is tame and does not satisfy (Fg) \cite{E}. We also mention that the condition \garc has been studied for symmetric algebras in \cite{DP2}. However, the study of $n$-symmetric algebras requires completely different methods. Recently, the terminology \garc originates in the thesis of Diveris \cite{D}.

We now give an outline of the article. In Section 2 we collect some basic lemmas and preliminary results. In Section 3 we prove the condition \garc for $n$-symmetric algebras of quasitilted type by dividing the task into two parts. We first prove the claim for $n$-symmetric algebras of tilted type in Corollary 3.9 and then for $n$-symmetric algebras of canonical type in Corollary 3.11.

\section{Preliminaries}
In this section we collect some background material and preliminary results. We assume throughout that all algebras are finite-dimensional $k$-algebras over an algebraically closed field $k$ and all modules are finitely generated.

We draw our attention to self-injective algebras with the property that every module is $\nu$-periodic. Namely, we will study $n$-symmetric algebras obtained from repetitive algebras of quasitilted algebras via a~standard construction, cf. \cite{LeS, EKS, SY}.

Let $B$ be a~tilted algebra of type $H$, where $H$ is a~hereditary algebra. Denote by $\widehat B$ the repetitive algebra of $B$ and let $\nu_{\widehat B}$ be its Nakayama automorphism. Then the self-injective algebra $B_n=\widehat{B}/\nu_{\widehat B}^n$, where $n\in \mathbb{N}$, has the property $\nu_{B_n}^n=1$, where $\nu_{B}$ is the induced Nakayama automorphism. For this reason we call $B_n$ an $n$-symmetric algebra. Of course, the case $n=1$ gives us the usual symmetric algebra. For simplicity, we omit the subscripts and write $\nu$ for the Nakayama automorphism in each case. We say that the self-injective algebra $B_n$ is an $n$-symmetric algebra of tilted type.

A wider class of $n$-symmetric algebras is formed by the algebras of quasitilted type. We say that an $n$-symmetric algebra $B_n$ is of quasitilted type if it is obtained via a~parallel construction starting with a~quasitilted algebra $B$. We recall that the class of quasitilted algebras consists of the tilted algebras along with the quasitilted algebras of canonical type (endomorphism algebras of tilting objects in hereditary categories whose derived categories coincide with the derived categories of the canonical algebras  \cite{LS}). Accordingly, the class of $n$-symmetric algebras of quasitilted type consists of the $n$-symmetric algebras of tilted type along with the $n$-symmetric algebras of canonical type.
For more background and discussion we refer the reader to \cite{SY}.

We now collect some useful properties of self-injective algebras and of the bounded derived category of a~hereditary algebra.

\subsection{Properties of modules over self-injective algebras} \label{subsec:selfinj}
For any two modules $M$ and $N$ over a~self-injective algebra $\Lambda$, we have the following isomorphisms:

\[ \label{eq:1} \StHom_{\Lambda}(M,N) \cong \Ext_{\Lambda}^1(\Omega^{-1}M,N) \cong \Ext_{\Lambda}^1(M,\Omega N) \tag{1}\]
where $\Omega$ denotes the syzygy operator.

\[\label{eq:2}   \Ext_{\Lambda}^i(M,N) \cong \Ext_{\Lambda}^{i-j}(\Omega^jM,N) \cong \Ext_{\Lambda}^{i-j}(M,\Omega^{-j}N)  \tag{2}\]
This is the degree shifting property for modules over self-injective algebras.

\[ \label{eq:3} \operatorname{D}\Ext_{\Lambda}^1(M,N) \cong \StHom_{\Lambda}(\tau^{-1}N,M) \cong \overline{\Hom}_{\Lambda}(N,\tau M) \tag{3}\]
where $\operatorname{D}(-)=\Hom_{\Lambda}(-,k)$ is the standard duality and $\tau$ denotes the AR translate. This is the Auslander-Reiten formula.

\subsection{Properties of the bounded derived category $\D$} \label{subsec:derived}
If $H$ is a~hereditary algebra of $\D$, then an object is a~bounded complex of $H$-modules. The morphisms are obtained by formally inverting all quasi-isomorphisms in the homotopy category. The indecomposable objects in $\D$ are the stalk complexes with indecomposable stalks. For more details on the bounded derived category we refer the reader to \cite{H}.

We now collect some properties of morphisms in $\D$ that become useful to us later. In what follows, $[i]$ denotes the shift of a~complex by $i$ degrees.
\begin{lem}\label{lem:derived_properties} Let $H$ be a~hereditary algebra and $\D$ the bounded derived category of $\Hmod$. Then for any $M, N \in \Hmod$:
\begin{enumerate}
\item $\Hom_{\D}(M[i],N[i]) = \Hom_{\D}(M,N)$ for any $i \in \mathbb{Z}$;
\item $\Hom_{\D}(M,N[i])= 0$ for $i\neq 0,1$;
\item $\Hom_{\D}(M,N[1])=\Ext^1_{H}(M,N)$;
\item $\Hom_{\D}(M,N[0])=\Hom_{H}(M,N)$.
\end{enumerate}
\end{lem}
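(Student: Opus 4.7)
The plan is to reduce all four statements to the standard identification $\Hom_{\D}(M, N[i]) \cong \Ext^i_H(M,N)$ for modules $M, N$, and then exploit the fact that $H$ is hereditary. Parts (1) and (4) are essentially formal: (1) follows because $[1]$ is an autoequivalence of $\D$, so applying $[-i]$ gives a bijection between $\Hom_{\D}(M[i], N[i])$ and $\Hom_{\D}(M, N)$; (4) is the well-known fact that the embedding $\Hmod \hookrightarrow \D$ sending a module to its stalk complex in degree zero is fully faithful.

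For (2) and (3), the approach I would take is to compute $\Hom_{\D}(M, N[i])$ by replacing $M$ with a projective resolution. First I would fix a projective resolution $P^\bullet \to M$ in $\Hmod$; since $H$ is hereditary, $\gldim H \leq 1$, so this resolution has the form $P^\bullet = [\,0 \to P_{-1} \to P_0 \to 0\,]$ concentrated in degrees $-1$ and $0$. Next I would invoke the standard fact (cf.\ Happel \cite{H}) that because $P^\bullet$ is a bounded-above complex of projectives, morphisms in $\D$ from $P^\bullet$ coincide with morphisms in the homotopy category, and moreover compute as the cohomology of the Hom-complex $\Hom_H^\bullet(P^\bullet, N)$. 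This gives $\Hom_{\D}(M, N[i]) \cong H^i(\Hom_H^\bullet(P^\bullet, N))$.

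From this, statement (3) drops out as the $i = 1$ cohomology, which equals $\coker\bigl(\Hom_H(P_0, N) \to \Hom_H(P_{-1}, N)\bigr) \cong \Ext^1_H(M, N)$. For (2), the Hom-complex is concentrated in cohomological degrees $0$ and $1$, so $H^i = 0$ for all $i \notin \{0,1\}$, which is exactly the vanishing claimed.

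I do not expect a genuine obstacle here: these are foundational facts about derived categories of hereditary algebras. The only delicate point is the justification that morphisms in $\D$ can be computed via the homotopy category once one resolves by projectives, and that the resolution can be taken to have length at most one — both of which rely on the hereditary hypothesis and can simply be cited from \cite{H}.
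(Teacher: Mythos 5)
Your proof is correct; the paper itself gives no argument for this lemma, simply recording these as standard facts about $\D$ with a pointer to Happel's book, and your reduction to $\Ext^i_H(M,N)$ via a length-one projective resolution is exactly the standard justification one would extract from that reference. The degree bookkeeping in your Hom-complex computation is right, so there is nothing to add.
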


\section{$n$-symmetric algebras of quasitilted type}

In this section we prove that the Generalized Auslander--Reiten Condition \garc  holds for all $n$-symmetric algebras of quasitilted type. The $n$-symmetric algebras of quasitilted type consists of two classes: the $n$-symmetric algebras of tilted type and the $n$-symmetric algebras of canonical type.  We consider each class separately.

We begin with a~couple of general observations. First, we note that the condition \garc holds for all self-injective algebras of finite representation type. This follows from the following quick lemma as all non-projective modules over such algebras have periodic resolutions, that is, $\Omega^mM\cong M$ for some integer $m>0$ where $\Omega$ is the syzygy operator.

\begin{lemma} Let $M$ be an $\Omega$-periodic module over a~self-injective algebra $\Lambda$. Then $M$ satisfies the condition \garc.
\end{lemma}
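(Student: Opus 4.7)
The plan is to reduce (GARC) for a periodic module to the assertion that $M$ must be projective, and then to extract projectivity from the vanishing of a single Ext group.

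First I would observe that over the self-injective algebra $\Lambda$, an $\Omega$-periodic module $M$ has projective dimension either $0$ or $\infty$. Hence the conclusion $\pd M \leq n$ is equivalent to $M$ being projective. Also, since $\Lambda$ is self-injective, the regular module $\Lambda$ is injective, so $\Ext^i_{\Lambda}(M,\Lambda)=0$ for all $i\geq 1$. Therefore the hypothesis $\Ext^i_{\Lambda}(M,M\oplus \Lambda)=0$ for $i>n$ is equivalent to $\Ext^i_{\Lambda}(M,M)=0$ for $i>n$.

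Next I would exploit the periodicity $\Omega^m M\cong M$ to propagate this vanishing to every positive degree. By the degree-shifting formula (2) of Section~2.1, $\Ext^{i+m}_{\Lambda}(M,M) \cong \Ext^{i}_{\Lambda}(\Omega^m M,M) \cong \Ext^{i}_{\Lambda}(M,M)$ for all $i\geq 1$. Thus the sequence $\bigl(\Ext^i_{\Lambda}(M,M)\bigr)_{i\geq 1}$ is periodic of period $m$, and since it vanishes for $i>n$ it vanishes for every $i\geq 1$.

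Finally, to upgrade $\Ext^1_\Lambda(M,M)=\cdots=\Ext^m_\Lambda(M,M)=0$ to projectivity of $M$, I would combine properties (1) and (2) together with periodicity to produce the chain of isomorphisms $\Ext^m_{\Lambda}(M,M) \cong \Ext^1_{\Lambda}(M,\Omega^{1-m}M) \cong \Ext^1_{\Lambda}(M,\Omega M) \cong \StHom_{\Lambda}(M,M)$, where the middle step uses $\Omega^{1-m}M \cong \Omega M$ coming from $\Omega^m M\cong M$. The vanishing of $\StHom_{\Lambda}(M,M)$ forces the identity morphism of $M$ to factor through a projective module, making $M$ a direct summand of a projective, hence projective itself. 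This gives $\pd M = 0 \leq n$, as required.

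The argument is essentially bookkeeping with the standard self-injective isomorphisms collected in Section~2.1, so I do not anticipate a substantive obstacle; the only point requiring mild care is to keep the degree-shift indices within the positive range where formulas (1) and (2) are meaningful, which is automatic as soon as $m\geq 1$.
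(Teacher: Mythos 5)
Your proposal is correct and follows essentially the same route as the paper: both arguments use the periodicity $\Omega^m M\cong M$ together with the degree-shifting isomorphisms of Section~2.1 to identify a vanishing high-degree self-extension group with $\StHom_{\Lambda}(M,M)$, whose vanishing forces $M$ to be projective. Your version merely spells out a few steps the paper leaves implicit (that $\Ext^i_{\Lambda}(M,\Lambda)=0$ automatically, and the propagation of vanishing to all positive degrees).
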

\begin{proof} We have $\Omega^m M\cong M$ for some integer $m>0$. The vanishing condition $\Ext^i(M,M)=0$ for all $i \gg 0$ along with properties from Section \ref{subsec:selfinj} then gives  \begin{align*}\Ext^{im}_{\Lambda}(M,M)& \cong\StHom_{\Lambda}(\Omega^{im}M,M)\\ & \cong \StHom_{\Lambda}(M,M)=0\end{align*} for all $i \gg 0$. The latter is only possible when $M$ is a~projective module.
\end{proof}

More generally, the condition \garc holds for all modules over $n$-symmetric algebras that lie in $\tau$-periodic AR components. This follows, from the lemma above since all such modules have periodic resolutions. Namely, the condition $\nu^n \cong 1$ combined with $\tau$-periodicity, say $\tau^k \cong 1$ where $k\in \mathbb{N}$ , gives us $1 \cong \tau^{nk}\cong \nu^{nk}\Omega^{2nk}\cong \Omega^{2nk}$, that is, we have $\Omega$-periodicity. Here we use the fact that over a~self-injective algebra we have $\tau \cong \nu\Omega^2 \cong \Omega^2\nu$. Since $\tau$-periodic and $\Omega$-periodic modules coincide, we shall refer to them simply as periodic modules.

In light of these comments, we need only examine the condition \garc for modules that are not periodic over $n$-symmetric algebras of infinite representation type.

\subsection{$n$-symmetric algebras of tilted type} We proceed to prove that the condition \garc holds for $n$-symmetric algebras $H_n=H/\nu^n$ that are constructed from a~hereditary algebra $H$.

The AR quiver of a~hereditary artin algebra is well-understood. If $H$ is of infinite representation type, there is a~ unique postprojective component and a~unique preinjective component. All other components are called regular. If $H$ is of tame representation type, the regular components are tubes, that is components of the form $\mathbb{ZA}_{\infty}/(\tau^k)$ for some $k\geq 1$. If $H$ is wild, then the regular components are of type $\mathbb Z A_{\infty}$. See  \cite{H}.

In what follows we need some information about morphisms between modules in the preinjective components and regular components. D. Baer proved the following statement about morphisms between regular modules over wild hereditary algebras in \cite{B}.

\begin{lem} \label{lem:Baer} Let $M$ and $N$ be regular modules over a~wild connected hereditary algebra $H$. Then $\Hom_H(M, \tau^i N) \neq 0$ for all $i\gg 0$.
\end{lem}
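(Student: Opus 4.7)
The plan is to compare $\dim \Hom_H(M,\tau^iN)$ with the Euler bilinear form on $K_0(H)$ and to exploit the asymptotic behavior of $\underline{\dim}(\tau^iN)$ under iteration of the Coxeter transformation. First I would recall that the Coxeter transformation $\Phi$ acts on the Grothendieck group by $\underline{\dim}(\tau X) = \Phi\,\underline{\dim}(X)$ whenever $X$ is a non-projective indecomposable, and dually for $\tau^{-1}$ on non-injectives. Since regular modules are stable under $\tau^{\pm1}$, iteration gives $\underline{\dim}(\tau^iN)=\Phi^i\underline{\dim}(N)$ for every $i\ge 0$. For a wild connected hereditary algebra, a Perron--Frobenius type argument (as worked out by Ringel and de la Pe\~na) shows that $\Phi$ admits a unique real eigenvalue $\rho>1$ of maximal modulus, together with a strictly positive eigenvector $y^{+}\in\mathbb{R}^{n}_{>0}$; analogously $\Phi^{-1}$ has a positive eigenvector $y^{-}$ for $\rho$.

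Next I would decompose $\underline{\dim}(N)=\alpha(N)\,y^{+}+v$, where $v$ lies in the sum of the remaining generalized eigenspaces of $\Phi$, all of whose eigenvalues have modulus strictly less than $\rho$. Then
\[
\Phi^i\,\underline{\dim}(N)=\alpha(N)\,\rho^i\,y^{+}+O(\rho_2^{\,i}),
\]
where $\rho_2<\rho$. Using the identity
\[
\dim\Hom_H(M,\tau^iN)-\dim\Ext^1_H(M,\tau^iN)=\langle\underline{\dim}(M),\Phi^i\underline{\dim}(N)\rangle
\]
together with $\dim\Ext^1\ge 0$, it would suffice to show that $\langle\underline{\dim}(M),\Phi^i\underline{\dim}(N)\rangle\to+\infty$ as $i\to\infty$. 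The leading-order contribution is $\alpha(N)\,\rho^i\,\langle\underline{\dim}(M),y^{+}\rangle$, so the whole task reduces to proving the two positivity statements $\alpha(N)>0$ and $\langle\underline{\dim}(M),y^{+}\rangle>0$ for regular $M$ and $N$.

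Finally, I would establish these two positivities by arguing that a regular module cannot have dimension vector orthogonal to the dominant eigendirections: one interprets $\alpha(N)=\langle y^{-},\underline{\dim}(N)\rangle$ (suitably normalized) and uses that the defect detects the three parts of the Auslander--Reiten quiver, taking opposite signs on postprojectives and preinjectives and being \emph{strictly positive} on regulars; the dual statement for $\langle\underline{\dim}(M),y^{+}\rangle$ follows by the same mechanism applied to $H^{\mathrm{op}}$. I expect this positivity statement to be the main obstacle of the proof: the exponential comparison and the linear algebra of $\Phi$ are routine, but the sign behavior of the defect on regular modules requires genuine input about the shape of the AR quiver of a wild hereditary algebra. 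Once positivity is in hand, the estimate $\dim\Hom_H(M,\tau^iN)\ge \alpha(N)\,\rho^i\,\langle\underline{\dim}(M),y^{+}\rangle+O(\rho_2^{\,i})$ forces $\Hom_H(M,\tau^iN)\neq 0$ for all $i\gg 0$.
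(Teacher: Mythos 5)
First, a point of reference: the paper does not prove this lemma at all --- it is quoted from Baer's article [B] (``Wild hereditary artin algebras and linear methods''), so your proposal has to be measured against Baer's original argument rather than against anything in the text. Your outline is in fact a faithful reconstruction of that argument: reduce $\dim\Hom_H(M,\tau^iN)$ to the Euler form via $\dim\Hom-\dim\Ext^1=\langle\underline{\dim}M,\Phi^i\underline{\dim}N\rangle$ and $\dim\Ext^1\ge 0$; invoke the spectral theory of the Coxeter transformation of a connected wild quiver (the spectral radius $\rho>1$ is a simple, strictly dominant eigenvalue with positive eigenvectors $y^{\pm}$ for $\rho^{\pm 1}$ --- Ringel, de la Pe\~na--Takane); and reduce the whole lemma to the two positivity statements $\alpha(N)>0$ and $\langle\underline{\dim}M,y^{+}\rangle>0$. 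All of this, including the identification of $\alpha(N)$ with $\langle y^{-},\underline{\dim}N\rangle$ up to a positive normalization, is correct, and you have correctly located the crux.

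The gap is that you never cross that crux: your last paragraph restates the positivity rather than proving it. Saying that the linear form $\langle y^{-},-\rangle$ ``is strictly positive on regulars'' \emph{is} the assertion $\alpha(N)>0$; no mechanism is supplied, and the one heuristic you offer --- the analogy with the defect --- points the wrong way. In the tame case the defect is the corresponding $\Phi$-eigenfunctional and it vanishes identically on regular modules (which is exactly why regular components are $\tau$-periodic tubes there); strict positivity on regular modules is a specifically wild phenomenon and is the real content of Baer's lemma. Note also that the easy consequences of positivity of the orbit $\underline{\dim}(\tau^mN)$ only give $\alpha(N)\ge 0$; ruling out $\alpha(N)=0$ cannot be done by the sign pattern on the three classes alone, since the complementary $\Phi$-invariant subspace can still contain eigenvalues of modulus greater than $1$, so the remainder term need not shrink. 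This is where Baer's ``linear methods'' do genuine work (her Lemmas 1.3--1.7; see also de la Pe\~na--Takane), and until you supply an argument of that kind the proof is incomplete precisely at the step everything else was reduced to.
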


We now turn to the preinjective components. The following lemma follows from Proposition 5.6 in Ch. IX \cite{ASS}.

\begin{lem} \label{lem:preinj} Let $H$ be a~hereditary algebra of infinite representation type. Let $M$ and $N$ be a~ preinjective $H$-modules. Then $\Hom_H(\tau^iM,N)\neq 0$ for all $i \gg 0$.
\end{lem}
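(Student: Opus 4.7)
The plan is to combine the translation structure of the preinjective component of $H$ with growth of dimension vectors under the Coxeter transformation. Assume without loss of generality that $H$ is connected and that $M$ and $N$ are indecomposable. Every indecomposable preinjective $H$-module has the form $\tau^c J$ for some indecomposable injective $J$ and some $c \geq 0$, so write $M = \tau^a I$ and $N = \tau^b J$; then $\tau^i M = \tau^{i+a} I$, and the task is to show $\Hom_H(\tau^{i+a} I,\tau^b J) \neq 0$ for $i \gg 0$.

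The first step is to reduce to the case where $N$ is injective. The Auslander--Reiten equivalence $\tau \colon \smod H \to \overline{\mod}\, H$ provides a natural isomorphism $\StHom_H(X,Y) \cong \overline{\Hom}_H(\tau X,\tau Y)$. Over a connected hereditary algebra of infinite representation type, there are no non-zero morphisms from indecomposable preinjective modules to indecomposable postprojective ones; in particular $\StHom_H(X,Y) = \Hom_H(X,Y)$ whenever $X$ is an indecomposable preinjective module. Combining these two facts yields a surjection $\Hom_H(\tau X,\tau Y) \twoheadrightarrow \Hom_H(X,Y)$. Iterating this surjection $b$ times reduces the problem to showing that $\Hom_H(\tau^{c}I,J) \neq 0$ for $c \gg 0$, with both $I$ and $J$ indecomposable injective.

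With $J = I(s)$ the injective envelope of the simple at vertex $s$, one has $\Ext^1_H(-,J)=0$ and the standard duality gives $\dim_k\Hom_H(X,J) = \dim_k X_s$ for every module $X$; equivalently, via the Euler form,
\[
\dim_k\Hom_H(\tau^c I,J) \;=\; \langle[\tau^c I],[J]\rangle \;=\; \langle\Phi^c[I],[J]\rangle,
\]
where $\Phi$ is the Coxeter transformation on $K_0(H)$. The statement therefore reduces to $(\Phi^c[I])_s > 0$ for $c \gg 0$.

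The hard part is this last positivity claim, and it is where the infinite-representation-type hypothesis is essential. In the wild case, $\Phi$ has spectral radius strictly greater than $1$ with a strictly positive Perron--Frobenius eigenvector, so $\Phi^c[I]$ grows exponentially and is eventually strictly positive coordinate-wise. In the tame case, $\Phi$ has a non-trivial Jordan block at the eigenvalue $1$ whose associated direction is the strictly positive minimal imaginary root $\delta$, giving linear growth with all coordinates eventually positive. In either case, $(\Phi^c[I])_s > 0$ for $c$ sufficiently large, which completes the proof. This spectral input is the substantive content of the cited Proposition~5.6 in \cite{ASS} Ch.~IX, and is the only step that genuinely uses the hypothesis that $H$ has infinite representation type.
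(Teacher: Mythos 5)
Your proposal is correct in outline, but it necessarily takes a different route from the paper, because the paper gives no proof of this lemma at all: it is disposed of in one line by citing Proposition 5.6 of Ch.~IX of \cite{ASS}. What you have done is reconstruct an argument. Your reductions are sound: restricting to $H$ connected (note this is genuinely a hypothesis rather than a ``without loss of generality'' --- the statement fails for disconnected $H$ with $M$ and $N$ supported on different blocks) and to $M,N$ indecomposable; writing $M=\tau^aI$, $N=\tau^bJ$ with $I,J$ injective; using that $\Hom_H(X,Y)=\StHom_H(X,Y)\cong\overline{\Hom}_H(\tau X,\tau Y)$ for $X$ indecomposable preinjective (valid since there are no non-zero maps from preinjective to postprojective modules, so nothing factors through a projective) to obtain the surjection $\Hom_H(\tau X,\tau Y)\twoheadrightarrow\Hom_H(X,Y)$ and reduce to an injective target; and identifying $\dim_k\Hom_H(\tau^cI,I(s))$ with $(\Phi^c[I])_s$. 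This buys a transparent statement of where the representation-infinite hypothesis enters, which the paper's bare citation hides.

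The one step you state too quickly is the final positivity claim. The Coxeter matrix is not a non-negative matrix, so the existence of a strictly positive eigenvector for the spectral radius $\rho>1$ does not formally imply that $\Phi^c[I]$ becomes coordinate-wise positive: one needs the component of $[I]$ along the Perron--Frobenius direction to be strictly positive, and in the tame case one needs the defect $\partial([I])$ to be non-zero of the correct sign (here a direct argument is available: $\Phi^{ch}[I]=[I]+c\,\partial([I])\delta$ must remain a dimension vector, hence non-negative, forcing $\partial([I])>0$). These facts are true for preinjective dimension vectors but are theorems about Coxeter transformations of representation-infinite hereditary algebras, not consequences of the spectral picture alone. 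You acknowledge this by deferring to the same reference the paper cites, which is fair --- just be aware that this deferred step is exactly where the content of the lemma lives.
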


We now use the above lemmas to gain information about morphisms in the derived category $\D$. Recall that if $H$ is of infinite representation type, then the derived category $\D$ has the following types of AR components: a~ transjective component (obtained from the preinjective and postprojective components of $\Hmod$) and regular components (obtained from the regular components of $\Hmod$).

\begin{prop} \label{prop:hereditary}Let $H$ be a~hereditary algebra of infinite representation type.
\begin{enumerate}
\item Let $M$ be an indecomposable non-periodic regular object in the derived category $\D$. Then we have $\Hom_{\D}(M,\tau^iM)\neq 0$ for all $i\gg 0$.
\item Let $M$ be an indecomposable transjective object in the derived category $\D$. Then we have $\Hom_{\D}(\tau^iM,M)\neq 0$ for all $i\gg 0$.
\end{enumerate}
\end{prop}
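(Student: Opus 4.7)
The plan is to reduce both parts to the statements of Lemmas~\ref{lem:Baer} and~\ref{lem:preinj} about $\Hmod$ by exploiting that $\tau_{\D}$ is an auto-equivalence of $\D$, and that the $\Hom_{\D}$ between two stalk complexes placed at the same shift equals the $\Hom_H$ of their underlying modules by Lemma~\ref{lem:derived_properties}.

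For part (1), I would first observe that every indecomposable object in a regular component of $\D$ is a stalk complex $N[j]$ with $N$ an indecomposable regular $H$-module: regular components contain no shifted projectives or injectives, and $\tau_{\D}$ restricts to the usual AR translate of $\Hmod$ on each such component. Hence $N$ inherits the non-periodicity of $M$, which forces $H$ to be wild, since every regular module of a tame hereditary algebra lies in a tube and is therefore $\tau$-periodic. Lemma~\ref{lem:Baer} then gives $\Hom_H(N, \tau^i N) \neq 0$ for all $i \gg 0$, and translating back via Lemma~\ref{lem:derived_properties}(1) and (4) yields
\begin{align*}
\Hom_{\D}(M, \tau_{\D}^i M) = \Hom_{\D}\bigl(N[j], (\tau^i N)[j]\bigr) = \Hom_H(N, \tau^i N) \neq 0.
\end{align*}

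For part (2), the first task is to describe how $\tau_{\D}$ moves through the transjective component. Starting from any postprojective indecomposable and iterating $\tau_{\D}$, one eventually reaches an indecomposable projective $P$, and the next iterate is $\tau_{\D}(P) = \nu(P)[-1]$, a shifted injective. From that moment on every further iterate of $\tau_{\D}$ is obtained by applying the ordinary $\tau$ of $\Hmod$ to a preinjective module sitting at the fixed shift $[-1]$, and no projective is ever encountered again (since $\tau$ sends preinjective to preinjective in $\Hmod$). Consequently, for any indecomposable transjective $M$ there exist integers $k \geq 0$ and $j \in \mathbb{Z}$ and a preinjective $N \in \Hmod$ with $\tau_{\D}^k M = N[j]$ and $\tau_{\D}^{k+\ell} M = (\tau^\ell N)[j]$ for all $\ell \geq 0$. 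Since $\tau_{\D}$ is an auto-equivalence, for $i \geq k$,
\begin{align*}
\Hom_{\D}(\tau_{\D}^i M, M) = \Hom_{\D}\bigl(\tau_{\D}^{i-k}(N[j]),\, N[j]\bigr) = \Hom_H(\tau^{i-k} N, N),
\end{align*}
and Lemma~\ref{lem:preinj} delivers the required non-vanishing for $i \gg 0$.

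The main obstacle is the bookkeeping in part (2): one must carefully describe the action of $\tau_{\D}$ across the projective/injective boundary in the transjective component and verify that the orbit eventually stabilizes inside preinjective modules at a single fixed shift~$j$. Once this is established, the $\Hom_{\D}$-identifications of Lemma~\ref{lem:derived_properties} convert the problem into a statement in $\Hmod$, and the appeal to Lemma~\ref{lem:preinj} (respectively Lemma~\ref{lem:Baer} for part~(1)) is routine.
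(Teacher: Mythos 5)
Your proposal is correct and takes essentially the same route as the paper, whose proof is only a two-line sketch (``by shifting we may assume $M$ corresponds to an $H$-module, non-periodicity forces $H$ wild, then apply Lemma~\ref{lem:Baer}, resp.\ Lemma~\ref{lem:preinj}''); you are simply supplying the bookkeeping the paper leaves implicit. The one blemish is the display in part (2): since $\tau_{\D}^k M = N[j]$ but $M \neq N[j]$, you should apply the auto-equivalence $\tau_{\D}^k$ to \emph{both} arguments to obtain $\Hom_{\D}(\tau_{\D}^i M, M) \cong \Hom_{\D}(\tau_{\D}^{i}(N[j]), N[j]) \cong \Hom_H(\tau^i N, N)$, rather than replacing $M$ by $N[j]$ in the second slot while shifting the exponent to $i-k$ --- a harmless off-by-$k$ slip that does not affect the conclusion for $i \gg 0$.
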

\begin{proof} (i) By shifting we may assume that $M$ corresponds to an indecomposable module over the hereditary algebra $H$. Since $M$ is not periodic, $H$ is wild and the result follows immediately from the properties of homomorphisms in $\D$ that were recalled in Subsection~\ref{subsec:derived} and Lemma\autoref{lem:Baer}.

(ii) This follows from a~similar argument and Lemma\autoref{lem:preinj}.
\end{proof}


We now examine homomorphisms over the $n$-symmetric algebra $H_n=\widehat{H}/\nu^n $ obtained from the hereditary algebra $H$ of infinite representation type.

\begin{prop} \label{prop:Hn}Let $M$ be a~non-projective non-periodic module over the algebra $H_n$. Then either $\StHom_{H_n}(M, \tau^i M)\neq0$ or $\StHom_{H_n}(\tau^i M, M)\neq0$ for all $i\gg 0$.
\end{prop}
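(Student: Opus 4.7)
The plan is to lift everything to the bounded derived category $\D$ and apply Proposition~\ref{prop:hereditary}. The bridge is provided by the Galois covering $\widehat{H} \to H_n = \widehat{H}/\nu^n$ (with group $\langle \nu^n \rangle \cong \mathbb{Z}$) together with Happel's triangle equivalence $\SHmod \simeq \D$. Under the standard pushdown/pullup formalism for Galois coverings of self-injective algebras, every indecomposable non-projective $H_n$-module $M$ admits an indecomposable lift $\widetilde{M}$ to $\widehat{H}$, and stable homomorphism spaces decompose as
$$
\StHom_{H_n}(M,N) \;\cong\; \bigoplus_{j\in\mathbb{Z}} \StHom_{\widehat{H}}(\widetilde{M},\,\nu^{nj}\widetilde{N}).
$$
Passing to $\D$, let $X,Y$ denote the images of $\widetilde{M},\widetilde{N}$ under Happel's equivalence. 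This equivalence carries $\nu_{\widehat{H}}$ to an auto-equivalence $\Phi$ of $\D$ and intertwines the Auslander--Reiten translates on the two sides, so the previous display becomes
$$
\StHom_{H_n}(M,N) \;\cong\; \bigoplus_{j\in\mathbb{Z}} \Hom_{\D}(X,\,\Phi^{nj} Y).
$$

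To conclude, it suffices to show that the $j=0$ summand of this decomposition is nonzero when $N = \tau^i M$ for $i \gg 0$, or that the $j=0$ summand of the analogous sum for $\StHom_{H_n}(\tau^i M, M)$ is nonzero, since any single nonzero summand forces the direct sum to be nonzero. Because $M$ is non-periodic, so is $X$, and therefore $X$ is indecomposable lying either in the transjective component of $\D$ or in a regular $\mathbb{Z}A_\infty$ component (the tame regular components consist entirely of $\tau$-periodic objects and are excluded by hypothesis). If $X$ is regular, then $H$ must be wild and Proposition~\ref{prop:hereditary}(i) yields $\Hom_{\D}(X,\tau^i X)\neq 0$ for all $i \gg 0$, whence $\StHom_{H_n}(M,\tau^i M)\neq 0$ for all $i \gg 0$. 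If instead $X$ is transjective, Proposition~\ref{prop:hereditary}(ii) yields $\Hom_{\D}(\tau^i X, X)\neq 0$ for all $i \gg 0$, whence $\StHom_{H_n}(\tau^i M, M)\neq 0$.

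The main obstacle is bookkeeping rather than substance: one must verify that the orbit-category decomposition of stable Hom, Happel's equivalence, and the AR translate all fit together so that a nonzero $\Hom$ in $\D$ genuinely contributes a nonzero summand to the stable Hom over $H_n$. These are standard compatibilities for coverings of self-injective algebras of tilted type, but they must be cited with care. Once assembled, the statement reduces to a two-case application of Proposition~\ref{prop:hereditary}, with the dichotomy transjective/regular dictating which of the two conclusions of the proposition holds.
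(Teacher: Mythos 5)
Your proposal follows essentially the same route as the paper: reduce to the repetitive algebra $\widehat{H}$ via the $\nu^n$-orbit (covering) relationship, pass to $\D$ by Happel's equivalence, and split into the regular/transjective cases handled by Proposition~\ref{prop:hereditary}(i) and (ii) respectively. The only difference is that you make explicit, via the Galois-covering direct-sum decomposition of stable Hom spaces, the step the paper compresses into ``follows immediately if we show $\StHom_{\widehat{H}}(M,\tau^i M)\neq 0$,'' which is a welcome clarification rather than a departure.
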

\begin{proof} First observe that modules over the algebra $H_n$ are the $\nu^n$-orbits of objects over the repetitive algebra $\widehat H$. Given a~module $M$ in $\SHmod$, also write  $M$ for its orbit when viewed as a~module over $H_n$.

We employ Happel's Theorem 4.9 in \cite{H} which shows that there is an equivalence of triangulated categories $\D \cong \SHmod$

There are two cases to consider: the case when $M$ is regular and the case when $M$ is transjective.

In the first case, when $M$ is regular , the statement $\StHom_{H_n}(M,\tau^iM)\neq 0$ for all $i \gg0$ follows immediately if we show that $\StHom_{\widehat H}(M,\tau^iM)\neq 0$ holds for all $i\gg0$. The latter, however, follows from (i) of Proposition~\autoref{prop:hereditary} combined with Happel's Theorem.

Similarly, when $M$ is transjective, the statement $\StHom_{H_n}(\tau^i M,M)\neq 0$ for all $i \gg0$ follows from (ii) of Proposition~\autoref{prop:hereditary}.
\end{proof}

We record the following theorem of Wakamatsu \cite{W}.

\begin{thm} \label{thm:Wakamatsu} Let $B$ be a~tilted algebra from a~hereditary algebra $H$. Then there is an equivalence of categories $\mathbf{F}\colon\ds \SBNmod \to \SHNmod$.
\end{thm}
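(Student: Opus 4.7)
The plan is to build the equivalence $\mathbf{F}$ by first invoking Happel's theorem to pass to the derived category, then using the tilting-induced derived equivalence to move from $B$ to $H$, and finally descending the resulting stable equivalence of repetitive algebras to the $\nu^n$-orbit quotients. More precisely, since $B$ is tilted from $H$, the tilting module yields a triangle equivalence $D^b(\mathrm{mod}\,B) \simeq D^b(\mathrm{mod}\,H)$. On the other side, Happel's theorem provides triangle equivalences $D^b(\mathrm{mod}\,B) \simeq \SHLmod$ and $D^b(\mathrm{mod}\,H) \simeq \SHmod$, where $\hat{\Lambda}$ stands for $\widehat{B}$ in the first instance. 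Composing these three triangle equivalences produces an equivalence $\widetilde{\mathbf{F}}\colon \SHLmod \to \SHmod$ at the level of repetitive algebras.

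The crucial intermediate step is to verify that $\widetilde{\mathbf{F}}$ intertwines the Nakayama automorphisms $\nu_{\widehat{B}}$ and $\nu_{\widehat{H}}$. Under Happel's equivalence, the Nakayama automorphism on the stable module category of a repetitive algebra corresponds (up to a shift) to the Serre functor on the bounded derived category of the associated algebra of finite global dimension. Since any triangle equivalence between Hom-finite triangulated categories with Serre duality automatically commutes with the Serre functor up to natural isomorphism, the composite $\widetilde{\mathbf{F}}$ necessarily intertwines $\nu_{\widehat{B}}$ with $\nu_{\widehat{H}}$. In particular, $\widetilde{\mathbf{F}}$ carries $\nu^n$-orbits to $\nu^n$-orbits.

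The final step is the descent. Using the standard identification of $B_n$-modules (respectively $H_n$-modules) with $\nu^n$-stable objects over the repetitive algebra $\widehat{B}$ (respectively $\widehat{H}$), and using the compatibility of $\widetilde{\mathbf{F}}$ with the Nakayama action, one obtains an induced functor $\mathbf{F}\colon \SBNmod \to \SHNmod$. Essential surjectivity of $\mathbf{F}$ follows from that of $\widetilde{\mathbf{F}}$, while full faithfulness requires one to track the new morphisms introduced by the orbit construction (in the orbit category, morphisms are sums over the $\nu^n$-orbit of a representative) and to check that these sums match on both sides.

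The main obstacle I anticipate is exactly this last bookkeeping: ensuring that the equivalence $\widetilde{\mathbf{F}}$ is not merely $\nu^n$-equivariant abstractly but comes equipped with natural isomorphisms $\widetilde{\mathbf{F}}\circ \nu_{\widehat{B}} \cong \nu_{\widehat{H}} \circ \widetilde{\mathbf{F}}$ that are compatible with the iterated comparison $\widetilde{\mathbf{F}}\circ \nu_{\widehat{B}}^n \cong \nu_{\widehat{H}}^n \circ \widetilde{\mathbf{F}}$, so that the descended functor $\mathbf{F}$ is well defined and fully faithful at the level of orbit Hom-spaces. This is the standard difficulty with orbit categories of triangulated categories, and in this setting it is resolved by Wakamatsu's careful analysis of the stable categories of self-injective algebras obtained from repetitive algebras modulo a power of the Nakayama automorphism.
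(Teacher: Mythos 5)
First, a point of comparison: the paper does not prove this statement at all --- it records it as a known theorem of Wakamatsu and simply cites \cite{W}. So your proposal cannot be measured against an argument in the paper; it has to stand on its own as a proof, and as such it contains a genuine gap.

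Your first two steps are sound: the tilting module gives a triangle equivalence $\operatorname{D^b}(\operatorname{mod}B)\simeq\operatorname{D^b}(\operatorname{mod}H)$, Happel's theorem (applicable since both algebras have finite global dimension) converts this into an equivalence $\operatorname{\widehat{B}\textrm{-}\underline{mod}}\to\SHmod$, and this composite does commute with the Nakayama functors, since on these stable categories $\nu$ is the composite of the Serre functor with a power of the shift, both of which are preserved by any triangle equivalence. The gap is in the descent. You treat $\SBNmod$ as if it were \emph{by definition} the $\nu^n$-orbit category of $\operatorname{\widehat{B}\textrm{-}\underline{mod}}$. It is not: $B_n=\widehat{B}/(\nu_{\widehat{B}}^n)$ is an honest finite-dimensional algebra, and the comparison of its module category with the orbit category of $\operatorname{mod}\widehat{B}$ goes through the push-down functor of the Galois covering $\widehat{B}\to B_n$. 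That functor is faithful and exact but is not dense in general; density is itself a theorem (requiring, for instance, local support-finiteness of $\widehat{B}$ in the sense of Dowbor and Skowro\'nski), and without it your $\mathbf{F}$ is neither defined on all of $\SBNmod$ nor essentially surjective onto $\SHNmod$. Moreover, even granting density, defining $\mathbf{F}$ on orbit Hom-spaces requires a coherent system of natural isomorphisms $\widetilde{\mathbf{F}}\circ\nu^k\cong\nu^k\circ\widetilde{\mathbf{F}}$ for all $k$, which you correctly single out as ``the main obstacle'' --- and then dispose of by saying it ``is resolved by Wakamatsu's careful analysis,'' i.e.\ by appealing to the very result you are trying to prove. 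As it stands the proposal is a plausible strategy outline rather than a proof; to complete it you would need to supply the density and coherence arguments yourself, or, as the paper does, simply quote \cite{W}.
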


We prove the following general lemma for stable equivalences between self-injective algebras which we later apply in the setting of $n$ symmetric algebras. We note that a~stable equivalence between self-injective algebras preserves representation type \cite{ARS, Kr}.

\begin{lem} \label{lem:stable}Let $\Lambda$ and $\Gamma$ be two self-injective artin algebras of infinite representation type. Assume that $\mathbf{S}\colon \SLmod \to \SGmod$ is an equivalences of categories. Then we have $\StHom_{\Lambda}(M, \tau^j_{\Lambda} M)=0$ precisely when $\StHom_{\Gamma}(\mathbf{S}(M),\tau^j_{\Gamma}\mathbf{S}(M))=0$.
\end{lem}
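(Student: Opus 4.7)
The plan is to reduce the statement to the well-known fact that a stable equivalence between self-injective algebras commutes with the Auslander--Reiten translate (up to natural isomorphism in the stable category), and then to invoke fully-faithfulness of $\mathbf{S}$ on stable Hom sets.

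First, I would recall the classical theorem of Auslander--Reiten (see, e.g., Auslander--Reiten--Smal{\o}) stating that any equivalence of stable module categories carries almost split sequences to almost split sequences, provided we avoid the projective-injective indecomposables. Since $\Lambda$ and $\Gamma$ are self-injective and of infinite representation type, every non-projective module is the right-hand term of an almost split sequence, so this hypothesis is harmless. Applying $\mathbf{S}$ to the AR-sequence $0 \to \tau_{\Lambda} M \to E \to M \to 0$ therefore yields (up to projective summands) an AR-sequence ending in $\mathbf{S}(M)$, whose left term must, by uniqueness, be isomorphic to $\tau_{\Gamma} \mathbf{S}(M)$ in $\SGmod$. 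Hence $\mathbf{S}(\tau_{\Lambda} M) \cong \tau_{\Gamma} \mathbf{S}(M)$, and by induction $\mathbf{S}(\tau_{\Lambda}^j M) \cong \tau_{\Gamma}^j \mathbf{S}(M)$ for every $j \ge 0$; the same argument applied to the quasi-inverse $\mathbf{S}^{-1}$ takes care of negative powers.

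Second, since $\mathbf{S}$ is an equivalence of categories, it induces a bijection
\[
\StHom_{\Lambda}(X,Y) \;\xrightarrow{\;\sim\;}\; \StHom_{\Gamma}(\mathbf{S}(X),\mathbf{S}(Y))
\]
for every pair of modules $X,Y$. Taking $X=M$ and $Y=\tau_{\Lambda}^j M$ and feeding in the intertwining isomorphism established above gives
\[
\StHom_{\Lambda}(M,\tau_{\Lambda}^j M) \;\cong\; \StHom_{\Gamma}(\mathbf{S}(M),\mathbf{S}(\tau_{\Lambda}^j M)) \;\cong\; \StHom_{\Gamma}(\mathbf{S}(M),\tau_{\Gamma}^j \mathbf{S}(M)),
\]
so one side vanishes if and only if the other does.

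The main obstacle is the first step, namely verifying that $\mathbf{S}$ commutes with $\tau$ on the stable category; this is where the hypotheses (self-injectivity together with infinite representation type, ensuring enough honest AR-sequences to pin down $\tau$ by its universal property) do all the work. Once this intertwining is in hand, the rest is a formal consequence of $\mathbf{S}$ being an equivalence.
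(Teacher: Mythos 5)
Your proposal is correct and follows essentially the same route as the paper: both reduce the claim to the fact that a stable equivalence between self-injective algebras commutes with the AR translate (the paper simply cites Section 1, Chapter X of [ARS] for this, where you sketch the underlying argument via preservation of almost split sequences) and then use that the equivalence induces isomorphisms on stable Hom sets.
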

\begin{proof} We first note that a~stable equivalence between self-injective algebras of infinite representation type commutes with the AR translate (Section 1, Chapter X in [ARS]).  We employ this fact along with properties of stable equivalence to obtain the following  set of isomorphisms:
\begin{align*}
\StHom_{\Gamma}(\mathbf{S}(M), \tau^j_{\Gamma}(\mathbf{S}(M))) &\cong \StHom_{\Gamma}(\mathbf{S}(M), \mathbf{S}(\tau^j_{\Lambda}(M))) \\
&\cong \StHom_{\Lambda}(M, \tau^j_{\Lambda}(M)) \\
\end{align*}

Therefore, $\StHom_{\Lambda}(M, \tau^j_{\Lambda} M)=0 \iff \StHom_{\Gamma}(\mathbf{S}(M),\tau^j_{\Gamma}\mathbf{S}(M))=0$.
\end{proof}

We now return to the study of the $n$-symmetric algebra $B_n =\widehat{B}/\nu^n$ where $B$ is obtained via a~tilt from a~ hereditary algebra $H$.

\begin{thm} Let $M$ be a~non-projective non-periodic module over an $n$-symmetric algbera $B_n$ of tilted type. Then we have the following.
\begin{enumerate} \item  If $M$ corresponds to a~regular module in $\Hmod$, we have \newline $\Ext_{B_n}^{2ni+1}(M,M)\neq 0$ for all $i \gg 0$.
\item If $M$ corresponds to a~non-regular module in $\Hmod$, we have \newline $\Ext_{B_n}^{2ni}(M,M)\neq 0$ for all $i \gg 0$.
\end{enumerate}
\end{thm}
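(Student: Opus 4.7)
The plan is to transfer the non-vanishing of stable $\Hom$ sets from $H_n$, where Proposition~\autoref{prop:Hn} already does the work, over to $B_n$, and then convert the stable $\Hom$ statements into $\Ext$ statements using the standard identities on the self-injective algebra $B_n$.

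First, Wakamatsu's Theorem~\autoref{thm:Wakamatsu} provides an equivalence $\mathbf{F}\colon \SBNmod \to \SHNmod$. Since a stable equivalence between self-injective algebras preserves projectivity, periodicity, and representation type, $\mathbf{F}(M)$ is a non-projective non-periodic $H_n$-module. Under Happel's identification $\D \cong \SHmod$ and the subsequent passage to the $\nu^n$-orbit category, $M$ corresponds to an indecomposable object of $\D$ which is regular exactly when $M$ comes from a regular $H$-module and transjective when $M$ comes from a non-regular one. Applying Proposition~\autoref{prop:Hn} to $\mathbf{F}(M)$, and tracing through its proof via Proposition~\autoref{prop:hereditary} to keep the two cases separate, one obtains $\StHom_{H_n}(\mathbf{F}(M),\tau^{i}\mathbf{F}(M))\neq 0$ for all $i\gg 0$ in the regular case and $\StHom_{H_n}(\tau^{i}\mathbf{F}(M),\mathbf{F}(M))\neq 0$ for all $i\gg 0$ in the transjective case. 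Lemma~\autoref{lem:stable} (and its immediate analogue with the arguments of $\StHom$ reversed, which has the same proof) now transports both conclusions back to $B_n$: $\StHom_{B_n}(M,\tau^{i}M)\neq 0$ in the regular case and $\StHom_{B_n}(\tau^{i}M,M)\neq 0$ in the transjective case, for all $i\gg 0$.

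It remains to convert these stable $\Hom$ statements into $\Ext$ statements. On the self-injective algebra $B_n$ one has $\tau\cong \nu\Omega^2$, and the $n$-symmetry $\nu^n\cong 1$ yields the key identity $\tau^{ni}\cong \Omega^{2ni}$. In the transjective case, the degree-shift formula~\eqref{eq:2} together with~\eqref{eq:1} gives
\[
\Ext^{2ni}_{B_n}(M,M)\cong \StHom_{B_n}(\Omega^{2ni}M,M)\cong \StHom_{B_n}(\tau^{ni}M,M),
\]
which is nonzero for all $i\gg 0$ by the previous step. In the regular case,~\eqref{eq:2} and the Auslander--Reiten formula~\eqref{eq:3} combined with $\Omega^{2ni}\cong \tau^{ni}$ give
\[
\Ext^{2ni+1}_{B_n}(M,M)\neq 0 \iff \StHom_{B_n}(M,\tau^{ni+1}M)\neq 0,
\]
and the right-hand side holds for all $i\gg 0$.

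The main obstacle is bookkeeping: one must carefully match the case distinction on $\D$ (regular vs.\ transjective) with the case distinction on $\Hmod$ (regular vs.\ non-regular), and carry this through the stable equivalences of Theorem~\autoref{thm:Wakamatsu} and Lemma~\autoref{lem:stable} without losing track of how the AR translates on $B_n$, $H_n$, $\widehat{H}$, and $\D$ correspond. Once this dictionary is in place, the passages between $\Ext$ and stable $\Hom$ are routine manipulations on self-injective algebras.
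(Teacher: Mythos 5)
Your proposal is correct and follows essentially the same route as the paper: transfer the non-vanishing of $\StHom_{H_n}(\mathbf{F}(M),\tau^i\mathbf{F}(M))$ (resp.\ $\StHom_{H_n}(\tau^i\mathbf{F}(M),\mathbf{F}(M))$) from Proposition~\ref{prop:Hn} back to $B_n$ via Wakamatsu's equivalence and Lemma~\ref{lem:stable}, then convert to $\Ext$ using $\tau^{ni}\cong\Omega^{2ni}$, degree shifting, and the Auslander--Reiten formula. Your explicit remark that the transjective case needs the reversed-argument variant of Lemma~\ref{lem:stable} is a point the paper glosses over, but it is the same argument.
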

\begin{proof} (i) Since $B_n$ is a~self-injective algebra we have $\tau\cong\nu \Omega^2$. Moreover, since $B_n$ is $n$-symmetric we have $\tau^{ni}\cong\Omega^{2ni}$ for all integers $i$. Therefore we obtain the isomorphisms
\begin{align*}
\Ext^{2ni+1}_{B_n}(M,M) &\cong\Ext^{1}_{B_n}(\Omega^{2ni}M,M) \\ &\cong\Ext^{1}_{B_n}(\tau^{ni}M,M) \\ &\cong \operatorname{D}\StHom_{B_n}(M,\tau^{ni+1}M)
\end{align*} where the last isomorphism is obtained from the AR formula.

Employing Wakamatsu's Theorem\autoref{thm:Wakamatsu} along with Lemma\autoref{lem:stable}, we see that $\StHom_{B_n}(M,\tau^{ni+1}M)=0$ precisely when $\StHom_{H_n}(\mathbf{F}(M),\tau^{ni+1}\mathbf{F}(M))=0$. We saw, however, in Proposition\autoref{prop:Hn} that $\StHom_{H_n}(\mathbf{F}(M),\tau^{ni+1}\mathbf{F}(M))\neq 0$ for all $i \gg0$, that is, we have $\Ext_{B_n}^{2ni+1}(M,M)\neq 0$ for all $i \gg 0$ as desired.

(ii) The proof of (ii) is similar. This time we use the isomorphisms
\begin{align*}
\Ext^{2ni}_{B_n}(M,M)  & \cong\StHom_{B_n}(\Omega^{2ni}M,M) \\
& \cong \StHom_{H_n}(\tau^{ni}M,M)
\end{align*} and the fact that $\StHom_{H_n}(\tau^{ni}M,M)\neq 0$ for all $i \gg0$ by Proposition\autoref{prop:Hn}.
\end{proof}

We note that part (i) of the above theorem for symmetric algebras is obtained in the proof of Theorem 9.4 of \cite{EKS}.

As an immediate corollary we obtain the following result.

\begin{cor} The Generalized Auslander-Reiten Condition holds for $n$-symmetric algebras of tilted type.
\end{cor}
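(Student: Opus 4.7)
The plan is to reduce the condition \garc for an $n$-symmetric algebra $B_n$ of tilted type to the statement that any module $M$ with $\Ext^i_{B_n}(M,M)=0$ for all sufficiently large $i$ must be projective. Since $B_n$ is self-injective, we have $\Ext^i_{B_n}(M,B_n)=0$ for every $i>0$, so the hypothesis $\Ext^i_{B_n}(M,M\oplus B_n)=0$ for all $i>n$ collapses to $\Ext^i_{B_n}(M,M)=0$ for $i>n$. Moreover, over a self-injective algebra the only modules of finite projective dimension are the projective modules, so proving $\pd M\le n$ is equivalent to proving $M$ is projective, i.e., $\pd M=0$.

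Suppose then that $M$ is a $B_n$-module satisfying $\Ext^i_{B_n}(M,M)=0$ for all $i\gg 0$, and assume for contradiction that $M$ is not projective. I would split into two cases according to whether $M$ is periodic.

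If $M$ is periodic, I would invoke Lemma 3.1 directly: its proof shows that for an $\Omega$-periodic module over a self-injective algebra, the vanishing of $\Ext^i(M,M)$ for $i\gg 0$ forces $M$ to be projective, contradicting our assumption. If instead $M$ is non-projective and non-periodic, I would apply the theorem just proved. In the regular case it gives $\Ext^{2ni+1}_{B_n}(M,M)\neq 0$ for all $i\gg 0$, and in the non-regular case it gives $\Ext^{2ni}_{B_n}(M,M)\neq 0$ for all $i\gg 0$. Either way, these non-vanishings occur at arbitrarily large degrees, directly contradicting the hypothesis.

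Since both cases lead to a contradiction, $M$ must be projective, so $\pd M = 0 \le n$, establishing \garc for $B_n$. There is no real obstacle here: the theorem and Lemma 3.1 together already cover every possible $M$, and the only thing to check carefully is the reduction step using self-injectivity to discard the $\Ext^i_{B_n}(M,B_n)$ summand in the hypothesis.
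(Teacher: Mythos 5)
Your proposal is correct and follows essentially the same route as the paper: the corollary is stated there as an immediate consequence of the preceding theorem together with Lemma 3.1 (which disposes of the periodic modules), and your write-up simply makes explicit the standard reductions -- discarding $\Ext^i_{B_n}(M,B_n)$ by self-injectivity and replacing ``$\pd M\le n$'' by ``$M$ is projective'' -- that the paper leaves implicit.
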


\subsection{$n$-symmetric algebras of canonical type}

The $n$-symmetric algebras of canonical type consist of the following classes: those of finite representation type (these are, in fact, of tilted type \cite{LP, GL}), tame representation type (these are of tubular type, cf. Theorem 7.5 of \cite{SY}), and wild type. Since the condition \garc holds for periodic AR components, it only remains to consider the $n$-symmetric algebras of wild canonical type.

\begin{thm} Let $\Lambda_n$ be an $n$-symmetric algebra of wild canonical type and let $M$ be a~non-projective and non-periodic  $\Lambda_n$-module. Then we have
 $\Ext^{2ni+1}_{B_n}(M,M) \neq 0$ for all $i \gg0$.
\end{thm}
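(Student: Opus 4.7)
The plan is to adapt the proof of the tilted case to the canonical setting, replacing the hereditary algebra $H$ by the category of coherent sheaves on a wild weighted projective line $\mathbb{X}$. Since $\Lambda$ is quasitilted of wild canonical type, there is a derived equivalence $D^b(\Lambda)\cong D^b(\operatorname{coh}\mathbb{X})$, and by the Happel-type theorem for repetitive algebras of quasitilted algebras (cf.\ \cite{SY}) one obtains a triangle equivalence $\SHLmod\cong D^b(\operatorname{coh}\mathbb{X})$. This plays the role of Happel's equivalence $\SHmod\cong\D$ in the tilted setting; since no tilting is involved here, the intermediate reduction via Wakamatsu's \autoref{thm:Wakamatsu} and \autoref{lem:stable} is not needed.

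The first step is to establish the analog of \autoref{prop:Hn} in the present setting: for every non-projective non-periodic $\Lambda_n$-module $M$, one has $\StHom_{\Lambda_n}(M,\tau^i M)\neq 0$ for all $i\gg 0$. Viewing $M$ as a $\nu^n$-orbit of an object of $\SHLmod$ and transporting it through the triangle equivalence above, this reduces to the morphism-existence statement that for every indecomposable non-periodic object $\widetilde{M}$ of $D^b(\operatorname{coh}\mathbb{X})$ one has $\Hom(\widetilde{M},\tau^i\widetilde{M})\neq 0$ for all $i\gg 0$. In the wild case, finite-length sheaves on $\mathbb{X}$ lie in stable tubes and are therefore periodic, so every non-periodic indecomposable in $D^b(\operatorname{coh}\mathbb{X})$ is, up to shift, an indecomposable vector bundle on $\mathbb{X}$. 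For such bundles an analog of Baer's \autoref{lem:Baer} holds in the hereditary abelian category $\operatorname{coh}\mathbb{X}$, since the Euler form of a wild weighted projective line is indefinite and the original arguments of \cite{B} carry over.

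Once the above is in hand, the theorem follows by the same chain of isomorphisms as in the tilted case:
\begin{align*}
\Ext^{2ni+1}_{\Lambda_n}(M,M) & \cong \Ext^1_{\Lambda_n}(\Omega^{2ni}M,M) \\
& \cong \Ext^1_{\Lambda_n}(\tau^{ni}M,M) \\
& \cong \operatorname{D}\StHom_{\Lambda_n}(M,\tau^{ni+1}M),
\end{align*}
using degree shifting, the $n$-symmetric identity $\tau^{ni}\cong\Omega^{2ni}$, and the Auslander-Reiten formula. Applying the first step with $i$ replaced by $ni+1$ forces $\Ext^{2ni+1}_{\Lambda_n}(M,M)$ to be non-zero for all $i\gg 0$.

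The main obstacle lies in the Baer-type input for $\operatorname{coh}\mathbb{X}$ in the wild case. Its proof requires adapting the wild-hereditary arguments of \cite{B} to the hereditary abelian category of coherent sheaves on $\mathbb{X}$, using the indefiniteness of the Euler form together with Riemann-Roch on weighted projective lines. The fact that only vector bundles (and not finite-length sheaves) contribute non-periodic indecomposables makes the asymptotic Hom-behaviour under iterated $\tau$ uniform, which collapses the regular/non-regular dichotomy of the tilted case into a single conclusion and explains why only $\Ext^{2ni+1}\neq 0$ (and not also $\Ext^{2ni}\neq 0$) appears in the statement.
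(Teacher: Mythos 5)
Your proposal follows essentially the same route as the paper: reduce via the Geigle--Lenzing derived equivalence $\operatorname{D^b}(\Lmod)\cong\operatorname{D^b}(\operatorname{coh}(\mathbb{X}))$ and Happel's theorem $\operatorname{D^b}(\Lambda)\cong\SHLmod$ to a Hom-nonvanishing statement for indecomposable vector bundles on the wild weighted projective curve $\mathbb{X}$ under iterated $\tau$, then conclude with the identical chain $\Ext^{2ni+1}_{\Lambda_n}(M,M)\cong\Ext^1_{\Lambda_n}(\tau^{ni}M,M)\cong\operatorname{D}\StHom_{\Lambda_n}(M,\tau^{ni+1}M)$. The one point of divergence is the step you yourself single out as the main obstacle: you propose to establish $\Hom_{\mathbb{X}}(M,\tau^i M)\neq 0$ for $i\gg 0$ for vector bundles by adapting Baer's wild-hereditary arguments to $\operatorname{coh}(\mathbb{X})$ via the indefiniteness of the Euler form and Riemann--Roch, but you do not carry this out, and as written this is a gap rather than a proof. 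The paper closes exactly this gap by citation: Theorem 10.1 of Lenzing--de la Pe\~na \cite{LP} already gives $\Hom_{\mathbb{X}}(M,\tau^i_{\mathbb{X}}M)\neq 0$ for $i\gg 0$ for vector bundles on a wild weighted projective curve, so no new adaptation of \cite{B} is needed. Your remaining observations --- that finite-length sheaves lie in stable tubes and hence only vector bundles contribute non-periodic modules, which is why the regular/transjective dichotomy of the tilted case collapses and only the odd-degree conclusion $\Ext^{2ni+1}\neq 0$ appears --- match the paper's reasoning.
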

\begin{proof} Geigle and Lenzing have proved that there is an equivalence of triangulated categories between the derived categories $\operatorname{D^b}(\Lmod)$ of a~wild canonical algebra $\Lambda$ and $\operatorname{D^b}(\rm{coh}(\mathbb{X}))$ of coherent sheaves on a~non-singular weighted projective curve $\mathbb{X}$ of genus $g>1$ \cite{GL}.  Note that each coherent sheaf $\mathcal F \in \rm{coh}(\mathbb X)$ splits into a~direct sum $\mathcal F=\mathcal F_0 \oplus  F$  where $\mathcal F_0$ is a~coherent sheaf  of finite length and  $F$ is a~vector bundle.  The AR quiver of $\Lambda$-modules over a~wild canonical algebra is described in terms of $\rm coh ({\mathbb X})$ in \cite{LM, LP, M}.  The self-injective algebras of wild canonical type have stable AR components of type $\mathbb{Z}A_{\infty}$ and stable tubes \cite{LS}. Any non-periodic non-projective module $M$ over the $n$-symmetric algebra $\Lambda_n$  corresponds to an object in $\operatorname{D^b(\Lambda)}$ which in turn corrresponds to a~vector bundle over $\mathbb X$. For ease of notation, call this vector bundle also $M$.

 Now, Theorem 10.1 of \cite{LP} shows that for a~vector bundle $M$ on $\mathbb X$, we have $\Hom_{\mathbb X}(M, \tau^i_{\mathbb X} M )\neq 0$ for $i \gg 0$. Since  $\operatorname{D^b}(\Lmod)  \cong \operatorname{D^b}(\rm{coh}(\mathbb{X})$ we obtain $\Hom_{\operatorname{D^b}(\Lambda)}(M, \tau^i M) \neq 0$ for all $i \gg 0$. Since $\Lambda$ has finite global dimension, Happel's Theorem 4.9 of \cite{H} gives an equivalence  $\operatorname{D^b}(\Lambda) \cong \SHLmod$. We thus obtain  $\StHom_{\widehat{\Lambda}}(M, \tau^i M) \neq 0$ for all $i \gg 0$ where $M$ denotes the corresponding object in $\SHLmod$. The latter then translates to the corresponding statement for non-projective $\Lambda_n$-modules, that is, we obtain $\StHom_{\Lambda_n}(M,\tau^i M)\neq 0$ for all $i \gg0$.  Using once again the isomorhisms from Section 2.1 and the fact that $\Lambda_n$ is $n$-symmetric, we then obtain $\Ext^{2ni+1}_{\Lambda_n}(M,M) \neq 0$ for all $i \gg0$.
\end{proof}

We note that in the case of symmetric algebras, the above statement follows from the proof of the theorem in Section 7.3 of \cite{LS}.

\begin{cor} The Generalized Auslander-Reiten Condition holds for $n$-symmetric algebras of canonical type.
\end{cor}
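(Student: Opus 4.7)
The plan is to assemble the corollary from the structural trichotomy of $n$-symmetric algebras of canonical type together with the theorem just proved. I would first invoke the classification recalled at the start of Subsection 3.2: an $n$-symmetric algebra of canonical type is either of finite representation type (equivalently of tilted type by \cite{LP, GL}), of tame representation type (equivalently of tubular type by \cite{SY}), or of wild canonical type. This reduces the problem to verifying \garc in each of these three cases separately.

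Next I would dispose of the finite and tame cases using observations made just before Subsection 3.1. In the finite representation type case, every non-projective module has an $\Omega$-periodic resolution, so the opening lemma of Section 3 applies directly; alternatively, one may invoke the already-established Corollary 3.9 for the tilted type. In the tame (tubular) case, every AR component is a stable tube, so every non-projective module is $\tau$-periodic; combined with the identity $\tau^{nk} \cong \Omega^{2nk}$ valid on $n$-symmetric algebras, this gives $\Omega$-periodicity, and the opening lemma of Section 3 again concludes the argument.

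For the wild canonical case I would argue by contradiction from the preceding theorem. Suppose $M$ is a $\Lambda_n$-module with $\Ext^i_{\Lambda_n}(M, M \oplus \Lambda_n) = 0$ for all $i > n$, and assume towards contradiction that $M$ is non-projective (since $\Lambda_n$ is self-injective, finite projective dimension is equivalent to being projective). If $M$ is periodic, the opening lemma of Section 3 yields $\Ext^{im}_{\Lambda_n}(M,M) \cong \StHom_{\Lambda_n}(M,M) \neq 0$ for infinitely many $i$, contradicting the vanishing hypothesis. If $M$ is non-periodic, the preceding theorem produces infinitely many indices of the form $2ni+1 > n$ at which $\Ext^{2ni+1}_{\Lambda_n}(M,M) \neq 0$, again contradicting the hypothesis. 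Hence $M$ is projective and $\pd_{\Lambda_n} M = 0 \le n$, establishing \garc.

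There is no real obstacle at the level of the corollary itself; the substantive work has already been carried out in the preceding theorem and in the parallel tilted-type argument. The only care needed is to make sure the trichotomy is stated correctly and that the periodic-module lemma is invoked to cover both the finite and tame branches uniformly.
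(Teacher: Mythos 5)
Your proposal is correct and follows essentially the same route as the paper: the corollary is obtained from the finite/tame/wild trichotomy, with the finite and tame cases handled by the periodic-module lemma at the start of Section 3 (as the paper notes just before the wild-type theorem) and the wild case handled by the non-vanishing of $\Ext^{2ni+1}_{\Lambda_n}(M,M)$ established in the preceding theorem. The only difference is that you spell out the contradiction argument explicitly, which the paper leaves implicit.
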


 Combining the results from Section 3 gives us our desired theorem:

\begin{thm} The Generalized Auslander-Reiten Condition holds for $n$-symmetric algebras of quasitilted type.
\end{thm}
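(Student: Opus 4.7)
The plan is simply to invoke the two cases already established. By definition, the class of $n$-symmetric algebras of quasitilted type is the union of the $n$-symmetric algebras of tilted type and the $n$-symmetric algebras of canonical type, where the latter is obtained by starting with a quasitilted algebra of canonical type and applying the same $\widehat{B}/\nu^n$ construction. So the first step is to recall this dichotomy, as already laid out at the end of Section~2.

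Next, I would handle each class by citing the corollary established earlier in Section~3. For an $n$-symmetric algebra $B_n$ of tilted type, the preceding corollary (the immediate consequence of the theorem that provides a nonvanishing $\Ext_{B_n}^{2ni}(M,M)$ or $\Ext_{B_n}^{2ni+1}(M,M)$ for every non-projective non-periodic $M$) shows that \garc holds; note that periodic modules are automatically covered by the opening lemma of Section~3 together with the observation that over an $n$-symmetric algebra every $\tau$-periodic module is $\Omega$-periodic. For an $n$-symmetric algebra of canonical type, the corollary obtained from the wild-canonical theorem covers the wild case, while the tame (tubular) and finite representation type cases reduce respectively to periodic AR components and to tilted type, as indicated at the start of Subsection~3.2.

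The only content of the theorem is therefore bookkeeping: combining Corollary~3.9 and Corollary~3.11 and observing that these together exhaust all $n$-symmetric algebras of quasitilted type. There is no real obstacle here because the hard technical work, namely producing nonvanishing $\StHom$ via Baer's lemma and Lemma~3.4 in the tilted case and via Geigle--Lenzing together with Theorem~10.1 of \cite{LP} in the canonical case, has already been done. In particular, one does not even need to re-derive a single $\Ext$ vanishing condition to finish the proof; the statement is a direct union of the two corollaries.
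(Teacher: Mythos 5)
Your proposal matches the paper exactly: the theorem is stated as an immediate combination of Corollary 3.9 (tilted type) and Corollary 3.11 (canonical type), which together exhaust the $n$-symmetric algebras of quasitilted type. No further argument is given or needed in the paper beyond this bookkeeping.
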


\section*{Acknowledgements}
The project was supported by the Polish National Science Center grant awarded on the basis of the decision number DEC-2011/01/N/ST1/02064.


\begin{thebibliography}{F7wide}





\bibitem[AR]{AR} M. Auslander, I. Reiten, {\it On a~generalized version of the Nakayama Conjecture}, Proc. Amer. Math. Soc. {\bf 52} (1975), 69--74.

\bibitem[ARS]{ARS} M. Auslander, I. Reiten and  S. O. Smal\o, {\it Representation Theory of artin Algebras}, Cambridge Studies in Advanced Mathematics, {\bf  36}, Cambridge University Press, Cambridge, (1995).

 \bibitem[ASS]{ASS} I. Assem, D. Simson, A. Skowro\'nski, {\it Elements of the Representation Theory of Associative Algebras. 1: Techniques of Representation Theory}, London Mathematical Society, Student Texts, {\bf 65}, Cambridge University Press, Cambridge, (2006).

 \bibitem[B]{B} D. Baer, {\it Wild Hereditary artin Algebras and Linear Methods}, Manuscripta Math., {\bf 55}, (1986), 69--82.



\bibitem[CH]{CH} L.W. Christensen, H. Holm, {\it Algebras that satisfy Auslander's condition on vanishing of cohomology}, Math. Z. {\bf 265} (2010), no. 1, 21--40.




\bibitem[D]{D} K. Diveris, {The eventual vanishing of self-extensions}, ProQuest LLC, (2012).

\bibitem[D3]{D3} K. Diveris, {On the Auslander-Reiten Conjecture}, (2014).

\bibitem[DP2]{DP2}  K. Diveris, M. Purin, {\it Vanishing of self-extensions over symmetric algebras}, Journal of Pure and Applied Algebra (2013).



\bibitem[E]{E} K. Erdmann, {\it On the Generalized Auslander-Reiten Condition}, talk at ARTA Conference in Toru\'n, Sept. 2013.

\bibitem[EK]{EK} K. Erdmann, O. Kerner, {On the Stable Module Category of a~Self-Injective Algebra}, Trans. Amer. Math. Soc. {\bf 352} no. 5 (2000), 2389--2405.

\bibitem[EKS]{EKS} K. Erdmann, O. Kerner, A. Skowro\'nski, {\it Self-injective algebras of wild tilted type}, Journal of Pure and Applied Algebra {\bf 149} (2000) 127--176.

 \bibitem[EHSST]{EHSST} K. Erdmann, M. Holloway, N. Snashall,\O. Solberg, R. Taillefer, {\it Support varieties for selfnjective algebras}, K-theory {\bf 33} (2004), 67--87.



\bibitem[GL]{GL} W. Geigle, H. Lenzing, {\it A class of weighted projective curves arising in representation
theory of finite dimensional algebras}. Singularities, representations of algebras, and
vector bundles, Lecture Notes in Mathematics {bf 1273}, (1987), pages 265--297.


\bibitem[H2]{H2} D. Happel, {\it Selforthogonal Modules}, Abelian groups and modules, (1995), 257--276.

\bibitem[H]{H} D. Happel, {\it Triangulated Categories in the Represenation Theory of finite-dimensional Algebras}, London Math. Soc. Lecture Note Series {\bf 119}, Cambridge University Press, Cambridge (1988).




\bibitem[Ho]{Hoshino} M. Hoshino, {\it Modules without self-extensions and {N}akayama's conjecture}, Arch. Math. (Basel),  {\bf 43} (1984) no. 6, 493--500.




 \bibitem[Kr]{Kr} H. Krause. {\it Stable equivalence preserves representation type}, Comment. Math. Helv. {\bf 72} (1997), 266--284.




\bibitem[LM]{LM} H. Lenzing, H. Meltzer, {\it Tilting sheaves and concealed-canonical algebras}, Representations of Algebras,
CMS Conf. Proc. {\bf 18}, Amer. Math. Soc., (1996), 455--473.

\bibitem[LeS]{LeS} H. Lenzing, A. Skowro\'nski, {On selfinjective algebras of Euclidean type}, Colloq. Math. {\bf 79} no. 1 (1999),

\bibitem[LS]{LS} H. Lenzing, A. Skowro\'nski, {\it Quasi-tilted algebras of canonical type}. Colloq. Math. {\bf 71}, 161--181 (1996).

\bibitem[LP]{LP} H. Lenzing, J-A de la Pe\~na, {\it Wild canonical algebras}. Math. Z. 224,(1997),  403-- 425.

\bibitem[M]{M} H. Meltzer, {\it Auslander-Reiten components for concealed canonical algebras}, Colloq. Math. {\bf 71} no. 2, (1996), 183--202.


 \bibitem[R]{R} C. M. Ringel, {\it Tame Algebras and Integral Quadratic Forms}, Lecture Notes in Math. {\bf 1099}, Springer-Verlag Berlin Heidelberg, (1984).


\bibitem[S]{Schulz} R. Schulz, {\it A nonprojective module without self-extensions},  Arch. Math. (Basel) {\bf 62} (1994), no. 6, 497--500.




\bibitem[SY]{SY} A. Skowro\'nski, K. Yamagata, {\it Selfinjective Algebras of Quasi-Tilted Type}, Trends in representation theory of algebras and related topics, (2000), 639--708.



\bibitem[W]{W} T. Wakamatsu, {\it Stable Equivalence between Universal Covers of Trivial Extension Self-injective Algebras}, Tsukuba Journal of Mathematics, Vol. 9 no. 2 (1985), 299--316.






\end{thebibliography}
\end{document}